 \newtheorem{theor}{Theorem}[section]
 \newtheorem{lem}[theor]{Lemma}
 \newtheorem{prop}[theor]{Proposition}
 \theoremstyle{definition}
 \newtheorem{defi}[theor]{Definition}
 \theoremstyle{remark}
 \newtheorem{rem}{Remark}
 \newtheorem{ex}{Example}
 \numberwithin{equation}{section}
\def\r{r}
\def\p{p}
\def\ellm{{\ell_{-}}}
\def\ellp{{\ell_{+}}}
\def\ellpm{{\ell_\pm}}
\def\dx{\partial_x}
\def\dxx{\partial_{xx}}
\def\dt{\partial_t}
\def\he{\mathrm{Hess}}
\def\RR{\mathbb R}
\def\eps{\varepsilon}
\def\gg{\mathcal{G}}
\def\ll{\mathcal{L}}
\def\rr{\mathcal{R}}
\def\d{\mathrm{d}}
\def\de{\partial}
\def\Id{\operatorname{Id}}
\def\half{\frac{1}{2}}
\newcommand{\nm}[1]{\left\| #1 \right\|}
\newcommand{\quot}[1]{``#1''}
\begin{document}

\title{Global existence for a class of viscous  systems of conservation laws}

\author{Luca Alasio}
\address{Gran Sasso Science Institute, L'Aquila (Italy)}
\email{luca.alasio@gssi.it}

\author{Stefano Marchesani}
\address{Gran Sasso Science Institute, L'Aquila (Italy}
\email{stefano.marchesani@gssi.it}

\subjclass{35B65, 35K51, 35K65, 35Q70}

\keywords{Parabolic systems in one dimension, global existence, viscous conservation laws}

\date{August 2019}

\begin{abstract}
We prove existence and boundedness of classical solutions for a family of viscous conservation laws in one space dimension for arbitrarily large time. The result relies on H. Amann's criterion for global existence of solutions and on suitable uniform-in-time estimates for the solution.
We also apply J\"ungel's boundedness-by-entropy principle in order to obtain global existence for systems with possibly degenerate diffusion terms.
This work is motivated by the study of a physical model for the space-time evolution of the strain and velocity of an anharmonic spring of finite length.
\end{abstract}

\maketitle

\section{Introduction and motivation}\label{sec:intro}

In this paper we study global existence and uniqueness of solutions for a family of parabolic systems of PDEs in one space dimension.
Although the literature concerning parabolic problems is very rich, results for nonlinear systems subject to non-homogeneous boundary conditions of different type (Dirichlet, Neumann or mixed) are not always available, even if the spatial domain is just an interval of the real line.
Moreover, many of the classical results for parabolic systems are formulated on a fixed time interval $[0,T]$ (as in \cite{ladyzhenskaia1988linear}), whereas we are interested in obtaining  estimates for the solutions that are valid for arbitrarily large time.
The study of global properties becomes trickier when the system includes cross-diffusion and possibly degenerate terms, as they affect the regularizing effects of diffusion terms.
We choose to restrict our attention to the case $d=1$ in order to make the exposition clearer and to give neat statements. Indeed, working in one dimension offers several advantages in terms of regularity results and Sobolev embeddings.
In some cases it is possible to extend many of the results we present to $d>1$ (see e.g. \cite{alasio2018stability, alasio2019trend}), however, in general, one can not guarantee existence of global, classical solutions of strongly coupled systems (see, for example, \cite{mooney2017, stara1995some}).
The system we consider is complemented by mixed, time-dependent boundary conditions which are imposed at the extreme points of the domain.
Hyperbolic and parabolic systems of conservation laws with mixed and time-dependent boundary conditions arise naturally when studying the thermodynamics of some microscopic systems (see for example \cite{olla2014microscopic, olla2015microscopic}). 
Our choice of boundary conditions was motivated by the case of an anharmonic chain of particles connected by nonlinear springs, where one end of the chain is fixed (homogeneous Dirichlet) and at the other end is applied a constant force (non-homogeneous Dirichlet).
By means of such boundary conditions and a suitable choice of the external force, it is possible define thermodynamic transformations and deduce the first and second law of Thermodynamics (macroscopic laws) as consequences of the microscopic dynamics. 
We refer to \cite{even2010hydrodynamic, MO1, MO2, olla2014microscopic} for further details concerning these physical models.
A prototypical model for our study is given by the following viscous p-system obtained as hydrodynamic limit (under hyperbolic space-time scaling) for the isothermal dynamics of an anharmonic chain subject to an external varying tension:
\begin{ex}\label{example}
As described in \cite{SM2}, a suitable choice of the microscopic model leads to the following viscous p-system
\begin{equation} \label{eq:psystem}
    \begin{cases}
        \de_t\r  =\de_x\p+ \delta_1 \de_{xx}\tau(\r)
        \\
        \de_t\p =\de_x\tau(\r) + \delta_2\de_{xx} \p
    \end{cases}, \qquad (t,x) \in (0,\infty) \times (0,1)
\end{equation}
with boundary conditions
\begin{equation}
    \p(t,0) = 0, \quad \tau(\r(t,1)) = a(t), \quad \de_x \p(t,1)=\de_x \r(t,0)=0.
\end{equation}
Here $\r$ and $\p$ represent infinitesimal strain and velocity of each point  of a anharmonic spring of finite length, $\tau$ is the internal tension, and $a$ represents the boundary tension.
\end{ex}
A second example of system we can study is the following:
\begin{ex}
Consider the $4\times 4$ system of PDEs given by
\begin{align} \label{eq:sysmax}
    \begin{cases}
    \de_t E_y = -\de_x B_z + \delta_1 \de_{xx}E_y
    \\
    \de_t E_z= \de_x B_y+ \delta_2 \de_{xx}E_z
    \\
    \de_t B_y = \de_x E_z + \delta_3 \de_{xx}B_y
    \\
    \de_t B_z = - \de_x E_y+ \delta_4 \de_{xx}B_z
    \end{cases}
    \; ,
    \end{align}
    with boundary conditions
    \begin{align}
        \mathbf E(t,0)=0,\quad \mathbf B(t,1)= \mathbf b(t), \quad \de_x \mathbf E(t,1)=0,  \quad \de_x \mathbf B(t,0)=0.
    \end{align}
    Here $\mathbf E =(E_y, E_z)$ and $\mathbf B =(B_y, B_z)$ can be interpreted as the $y$ and $z$ components of an electric and magnetic field in the vacuum.
    When $\delta_i =0$, system \eqref{eq:sysmax} reduces to the  $y$ and $z$ components of the Maxwell equations
    \begin{align}
        \de_t \mathbf E = \operatorname{curl} \mathbf B, \qquad \de_t \mathbf B =- \operatorname{curl} \mathbf E,
    \end{align}
    in the case where $\mathbf E$ and $\mathbf B$ propagate along the $x$-axis (that is, when they do not depend on $y$ and $z$). The Dirichlet boundary conditions fix the values of the electric field at $x=0$ and of the magnetic field at $x=1$. The Neumann conditions are no-flux conditions.
\end{ex}

\section{Set up and main results}

Consider the interval $I=(\ellm, \ellp) \subset \RR$, where $\ellm < \ellp$, and let  $\xi^k$ denote the $k$-th component of a generic vector $\xi\in\RR^N$. 
We denote by $Q_T$ the parabolic cylinder $[0,T)\times I$.
For $k=1\dots N$, we consider the system of PDEs
\begin{equation}\label{eq:sysgen0}
\de_t u^k = \de_x \sum_{l=1}^N\left(
M^{kl} F^l(u) + A^{kl}\de_x F^l(u)
\right),
\end{equation}
where the unknown $u$ is a vector function of the independent variables time $t$ and space $x$, namely  $u:[0,T]\times \bar I\to\RR^N$, $F:\RR^N\to\RR^N$ is a vector field of class $C^1$.
The matrices $A=\{A^{kl}\}$ and $M=\{M^{kl}\}$, for $k,l=1,\dots N$, will be defined precisely later on. 
It is sometimes convenient to write equation \eqref{eq:sysgen0} in matrix form, in particular
\begin{equation} \label{eq:sysmatrix}
\de_t u = \de_x \left(
M F(u) + A \de_x F(u)
\right).
\end{equation} 
We shall also denote by $\cdot$ the scalar product of $\RR^N$.

Equation \eqref{eq:sysgen0} is complemented by the following boundary conditions:
\begin{align}\label{eq:bc}
F^1(u(t,\ellp))  = a(t),
&\qquad (\de_xF^1)(u(t,\ellm)) = 0, &
\nonumber \\
(\de_xF^2)(u(t,\ellp)) = 0,
&\qquad F^2(u(t,\ellm)) = 0, &
\\
F^k(u(t,\ellp)) = 0,
&\qquad F^k(u(t,\ellm)) = 0, & \text{ for } k > 2
\nonumber
\end{align}
and by the initial condition
\begin{equation}\label{eq:ic}
    u^k(0,x) = u^k_0(x).
\end{equation}

\begin{rem}
It is possible to replace \eqref{eq:bc} with other boundary conditions such as homogeneous Dirichlet or assigned periodic conditions and our results still hold with different constants.
\end{rem}

\noindent We consider the following set of assumptions:
\begin{enumerate}
\item[\bf{[H1]}]
We assume that $A$ and $M$ are $N\times N$ symmetric matrices. We suppose that $A=A(x)$ is (at least) of class $C^1$ with respect to $x\in \Bar{I}$ and that there exists a constant $\mu>0$ such that, for any $\xi\in\RR^N$, $x\in\bar{I}$, it holds
$$\mu |\xi|^2\leq A(x)\xi\cdot\xi \leq \frac{1}{\mu}|\xi|^2.$$
We also impose the following compatibility conditions (for $k=1\dots N$):
\begin{align}
    A^{1k}(l_-)= 0,\quad &k \neq 1
    \\
    A^{2k}(l_+)= 0,\quad & k \neq 2.
\end{align}
We further assume that $M=M(t)$ is (at least) of class $C^\alpha$, for some $\alpha\in(0,1/2)$, bounded with respect to $t\in [0,\infty)$ and we require that 
$$
M^{kk}=0, \qquad \forall k=1,\cdots,N.
$$
\item[\bf{[H2]}]
The function $a:[0,\infty)\to\RR$ satisfies the following conditions:
$$
\nm{a}_{L^\infty(0,\infty)}\leq a_0,
\qquad
\left(
\int_0^\infty |a'(t)|^p \d t
\right)^{\frac{1}{p}} \leq a_1,
$$
for any $p\in[1,2]$ and suitable constants $a_i>0$, $i=0,1$.
\item[\bf{[H3]}]
The initial condition $u_0:\bar I\to\RR^N$ is (at least) of class $C^{2}$ and it is compatible with the boundary conditions \eqref{eq:bc}.
\item[\bf{[H4]}]
There exists a convex function $H:\RR^N\to \RR_+$ of class $C^2$ and a constant $\lambda>0$ such that 
$$
DH(u) = F(u)
\quad \text{ and } \quad 
\he(H)(u)\xi\cdot\xi \geq \lambda |\xi|^2, 
\quad \forall u,\xi \in\RR^N.
$$
Furthermore, $F:\RR^N \to \RR^N$ is monotone of class $C^1$  and there exists a constant $\Lambda\geq\lambda$ such that, for any $ \xi_1,\xi_2\in\RR^N$,
\begin{equation}\label{eq:monotone}
\lambda|\xi_1-\xi_2|^2
\leq 
(F(\xi_1) - F(\xi_2))\cdot (\xi_1 - \xi_2)
\leq
\Lambda|\xi_1-\xi_2|^2.
\end{equation}
We also assume that $F(0)=0$.
\end{enumerate}

\begin{rem}[Initial datum in $L^2$]
If the initial datum is not smooth but, for example, it only belongs to $L^2(I)$, the initial time has to be excluded but our results still hold in a subset of the form $(t_0, T)$, for $t_0$ arbitrarily close to $0$.
\end{rem}
\begin{rem}
Model \eqref{eq:psystem}  is obtained as a special case of system \eqref{eq:sysgen0} setting
$$
u = \begin{pmatrix} r \\ p\end{pmatrix},
\quad
F(u) = \begin{pmatrix} \tau(r) \\ p\end{pmatrix},
\quad
M = \begin{pmatrix} 0 & 1 \\ 1 & 0\end{pmatrix}, 
\quad
A = \begin{pmatrix} \delta_1 & 0 \\ 0 & \delta_2\end{pmatrix}.
$$
Similarly, model \eqref{eq:sysmax} is obtained from system \eqref{eq:sysgen0} for $F(u)=u$ setting
$$
u = \begin{pmatrix} E_y \\ E_z \\ B_y \\ B_z\end{pmatrix},
\quad
M = \begin{pmatrix} 0 & 0 & 0 & -1 \\ 0 & 0 & 1 & 0 \\ 0 & 1 & 0 & 0 \\ -1 & 0 & 0 & 0\end{pmatrix}, 
\quad
A = \begin{pmatrix} \delta_1 & 0 & 0 & 0 \\ 0 & \delta_2 & 0 & 0 \\ 0 & 0 & \delta_3 & 0\\ 0 & 0 & 0 & \delta_4\end{pmatrix}.
$$
\end{rem}
Our main result is the following:
\begin{theor}\label{thm:main}
Under the hypotheses {\bf{[H1]}-\bf{[H4]}}, problem \eqref{eq:sysgen0}-\eqref{eq:ic} admits a bounded, global, classical solution in $C^1([0,\infty);C^0(\bar{I})) \cap C^0([0,\infty);C^2(\bar{I}))$.
Furthermore, there exists a constant $C>0$ independent of time such that
\begin{align}
\int_I &
 \left(
 |u(T,x)|^2 + 
\mu|\dx F(u(T,x))|^2
 \right)\d x 
 \nonumber \\ &+
 \int_{Q_T}
\left(
\mu |\dx F(u)|^2
+
 |\dt u|^2
+
|\dx (A \dx F(u))|^2
\right)
 \d x \d t
 \leq C.
\end{align}
\end{theor}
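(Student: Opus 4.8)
The plan is to combine H. Amann's global existence criterion — which reduces the problem to establishing a uniform-in-time a priori bound on a suitable norm of the solution (preventing blow-up) — with a sequence of energy estimates derived by testing the system against carefully chosen quantities. The starting point is local existence: hypotheses \textbf{[H1]}--\textbf{[H3]} put the system in the normally parabolic framework of Amann, so there is a maximal classical solution on $[0,T_{\max})$, and it suffices to show that the quantity appearing on the left-hand side of the stated inequality stays bounded on $[0,T_{\max})$, with a bound independent of $T$; Amann's criterion then forces $T_{\max}=\infty$.

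First I would derive the basic $L^2$ estimate. Testing \eqref{eq:sysmatrix} against $F(u)$ and integrating over $I$, the structural hypotheses do the work: the term $\int_I \de_x(MF(u))\cdot F(u)$ vanishes because $M$ is symmetric with zero diagonal (so $MF(u)\cdot F(u)$ integrates to a boundary term that the boundary conditions \eqref{eq:bc} and the compatibility conditions kill), while the diffusion term gives, after integration by parts, $-\int_I A\de_x F(u)\cdot\de_x F(u)\le -\mu\int_I|\de_x F(u)|^2$ up to a boundary contribution controlled by $a(t)$ and $a'(t)$ via \textbf{[H2]}. Using \textbf{[H4]} — the potential $H$ with $DH=F$ and $\he(H)\ge\lambda$ — the left side $\int_I F(u)\cdot\de_t u=\frac{d}{dt}\int_I H(u)$ is comparable to $\frac{d}{dt}\|u\|_{L^2}^2$, and the monotonicity \eqref{eq:monotone} together with $F(0)=0$ gives $\lambda|u|^2\le F(u)\cdot u\le\Lambda|u|^2$, so $|F(u)|$ and $|u|$ are equivalent. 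Integrating in time and absorbing the boundary terms with Young's inequality and the $L^p$-in-time bound on $a'$ yields $\sup_{t}\int_I|u|^2+\int_{Q_T}\mu|\de_x F(u)|^2\le C$.

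Next comes the higher-order estimate, which is the main obstacle. I would differentiate the equation, or equivalently test \eqref{eq:sysmatrix} against $\de_t(F(u))=\he(H)(u)\de_t u$ (or against $-\de_x(A\de_x F(u))$), to produce the terms $\int_{Q_T}|\de_t u|^2$ and $\int_{Q_T}|\de_x(A\de_x F(u))|^2$ and the time-derivative of $\int_I\mu|\de_x F(u)|^2$. The delicate points are: (i) handling the boundary terms generated by integration by parts — here the precise mixed Dirichlet/Neumann structure in \eqref{eq:bc} and the compatibility conditions $A^{1k}(\ellm)=0$ for $k\ne1$, $A^{2k}(\ellp)=0$ for $k\ne2$ are exactly what is needed to make the boundary contributions either vanish or be absorbed, with the remaining ones involving only $a(t),a'(t)$; (ii) controlling the cross-diffusion/advection term $\de_x(MF(u))$, which after the manipulation produces terms like $\int M\de_x F(u)\cdot\de_x(\he(H)\de_t u)$ that must be dominated using $C^\alpha$-regularity of $M(t)$, the ellipticity of $A$, and the already-established first-order bound, via Young's inequality with a small parameter so the good terms absorb them; (iii) dealing with the $x$-dependence of $A$, whose $C^1$ regularity produces lower-order commutator terms $\de_x A\cdot\de_x F(u)$ that are controlled by the first estimate. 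One also needs an $L^\infty$ bound on $u$ to close nonlinear terms; in $d=1$ this follows from $\sup_t\|u\|_{L^2}+\int\|\de_x F(u)\|_{L^2}$ via the embedding $H^1(I)\hookrightarrow L^\infty(I)$ together with the equivalence of $u$ and $F(u)$ and interpolation, giving at least an $L^\infty_t L^\infty_x$-type control sufficient for the argument.

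Finally, assembling a Grönwall-type inequality for $\Phi(t):=\int_I(|u|^2+\mu|\de_x F(u)|^2)$ of the form $\Phi'(t)+c\int_I(|\de_t u|^2+|\de_x(A\de_x F(u))|^2)\le g(t)\Phi(t)+h(t)$ with $g,h\in L^1(0,\infty)$ — the integrability of $g,h$ coming from \textbf{[H2]} and the first energy estimate — yields the uniform bound $\Phi(T)+\int_{Q_T}(\mu|\de_x F(u)|^2+|\de_t u|^2+|\de_x(A\de_x F(u))|^2)\le C$ with $C$ independent of $T$. Combined with the $L^\infty$ bound, this is precisely the a priori control required by Amann's criterion to upgrade the local solution to a bounded global classical solution in $C^1([0,\infty);C^0(\bar I))\cap C^0([0,\infty);C^2(\bar I))$, and it is the stated estimate. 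I expect the bookkeeping of boundary terms under the mixed time-dependent boundary conditions, and the absorption of the cross-diffusion terms with only $C^\alpha$-in-time regularity of $M$, to be the technically heaviest parts.
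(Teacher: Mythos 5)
Your overall architecture --- local existence, an $L^2$ energy estimate from testing against $F(u)$, a higher-order estimate from testing against $\de_t F(u)$ and $-\de_x(A\de_x F(u))$, and then Amann's criterion --- is exactly the paper's (Propositions \ref{prop:est1} and \ref{prop:est2}). Your sketch of the two estimates is consistent with the paper's, including the need for a small Young parameter to absorb the mixed terms (the paper makes this quantitative via the condition $\lambda>\frac{1}{4\sigma}>\frac12$, removable by rescaling time), although you describe the surviving boundary term only loosely: the actual mechanism is to recognize the combination $M^{12}F^2(u(t,\ellp))+\sum_{l\neq2}A^{l1}(\ellp)\de_xF^l(u(t,\ellp))$ as $\de_t\int_I u^1\,\d x$ by integrating the first equation in $x$, and then integrate by parts in time against $a(t)$.

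The genuine gap is at the final step. You assert that the uniform bound on $\sup_T\Phi(T)+\int_{Q_T}(|\de_t u|^2+|\de_x(A\de_x F(u))|^2)$, together with the $L^\infty$ bound, ``is precisely the a priori control required by Amann's criterion.'' It is not, for the version of the criterion actually invoked: Theorem \ref{thm:amann} requires $u\in C^\eps(J(u_0)\cap[0,T];C^0(\bar I))$ with $\eps$ and the H\"older seminorm independent of $T$. A bound in $L^\infty(Q_T)$ together with $u\in L^2(0,T;H^2(I))\cap H^1(0,T;L^2(I))$ does not by itself yield H\"older continuity in time; an interpolation/embedding step is needed and is not a formality. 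The paper supplies it as Lemma \ref{lem:fractional}: after extension to $\RR^2$, a Fourier multiplier argument gives $u\in H^r(0,T;H^s(I))$ for all $r+\frac{s}{2}\le1$, and the fractional Sobolev embedding then yields $u\in C^{0,\alpha}([0,T];C^{0,\beta}(\bar I))$ for $2\alpha+\beta\le\frac12$, with norms controlled by the time-independent energy bounds. Without this (or an equivalent anisotropic interpolation argument), your appeal to Amann's criterion does not close; you should either prove such an embedding or switch to a version of the criterion whose hypothesis your estimates verify directly.
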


In Section \ref{sec:degenerate} we will briefly discuss a way to extend global existence results to a family of possibly degenerate systems (under suitable assumptions); this technique was established in \cite{jungel2015boundedness} (see also \cite{jungel2016entropy}).

Theorem \ref{thm:main}, will be proved in Section \ref{sec:estimates} and we will use the following two fundamental \quot{building blocks}:

\begin{theor}[Existence of classical solutions for short times, \cite{acquistapace1988quasilinear}]\label{thm:AT}
Let hypotheses {\bf{[H1]}-\bf{[H4]}} hold, then there exists a time $t_1\in(0,T)$ such that, given a sufficiently smooth initial datum $u_0$ that is compatible with the boundary conditions, problem \eqref{eq:sysgen0}-\eqref{eq:ic} has a unique solution $u$ in the interval $[0,t_1]$ which satisfies
$$
u\in C^{1+\alpha}([0,t_1];L^2(I))\cap C^{\alpha}([0,t_1];H^2(I)),
$$
where $\alpha\in (0,\frac{1}{2})$ depends on the regularity of $u_0$ and $M$.
Furthermore, we have
$$
\de_t u, \, \dxx u \in C^{\alpha_1}([0,t_1];C^0(\bar{I})),
$$
for each $\alpha_1\in(0,\alpha)$.
\end{theor}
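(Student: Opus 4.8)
The plan is to verify that hypotheses \textbf{[H1]}--\textbf{[H4]} place problem \eqref{eq:sysgen0}--\eqref{eq:ic} exactly within the scope of the abstract quasilinear non-autonomous theory of Acquistapace and Terreni \cite{acquistapace1988quasilinear}, and then to recover the stated classical regularity by one-dimensional Sobolev embeddings. First I would recast the system in quasilinear form. Writing $\dx F(u)=DF(u)\dx u$ and using that $M=M(t)$ is independent of $x$ while $A=A(x)$, the right-hand side of \eqref{eq:sysmatrix} expands so that its principal part is $A(x)DF(u)\dxx u$, with all remaining terms of lower order in the derivatives of $u$. The diffusion matrix $A(x)DF(u)$ is the product of two symmetric positive definite matrices: $A$ by \textbf{[H1]} and $DF(u)=\he(H)(u)$ by \textbf{[H4]}. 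Although this product is in general not symmetric, it is similar to the symmetric positive definite matrix $A^{1/2}DF(u)A^{1/2}$ and hence has real, uniformly positive spectrum; this is precisely the normal-ellipticity (parabolicity) condition required to run the abstract machinery.

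Next I would reduce the inhomogeneous, time-dependent boundary conditions \eqref{eq:bc} to homogeneous ones. Using \textbf{[H2]} I would choose a sufficiently smooth lift $\bar u(t,x)$ realising the nonhomogeneous Dirichlet datum $F^1(u(t,\ellp))=a(t)$, the remaining conditions being homogeneous, and set $w=u-\bar u$; the integrability $a\in W^{1,p}$ together with $a\in L^\infty$ guarantees that $\bar u$, and hence the transformed source term, lie in the required Hölder-in-time classes, while the compatibility of $u_0$ from \textbf{[H3]} makes the initial datum for $w$ admissible. I would then work in the base space $X=L^2(I;\RR^N)$ with the operator $\mathcal{A}(t)$ acting as $A(x)DF(\cdot)\dxx$ on the domain $D(\mathcal{A})=\{v\in H^2(I;\RR^N): v \text{ satisfies the homogenised form of \eqref{eq:bc}}\}$.

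The core of the argument is to check the two structural requirements of the Acquistapace--Terreni framework for the frozen-coefficient family. First, for any fixed admissible state the linear operator $v\mapsto A(x)DF(v_0)\dxx v$ with these boundary conditions must be sectorial in $X$ with the appropriate resolvent estimate: the positivity of the spectrum of $A\,DF$ yields sectoriality, while the componentwise-mixed Dirichlet/Neumann conditions satisfy the Lopatinskii--Shapiro complementing condition. Here the compatibility conditions $A^{1k}(\ellm)=0$ for $k\neq 1$ and $A^{2k}(\ellp)=0$ for $k\neq 2$ in \textbf{[H1]} are exactly what is needed for the boundary operator to match the principal symbol at the endpoints, so that the boundary value problem is properly elliptic. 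Second, the time dependence enters only through $M(t)$, which by \textbf{[H1]} is $C^\alpha$ in $t$; this produces the Hölder continuity of $t\mapsto\mathcal{A}(t)$ in the operator-norm sense that the AT conditions demand, and is precisely why the non-autonomous theory, rather than a simpler analytic-semigroup argument, is required. With these verified, the abstract theorem yields a unique solution of the linearised problem in the maximal Hölder-regularity class $C^{1+\alpha}([0,t_1];X)\cap C^{\alpha}([0,t_1];D(\mathcal{A}))$.

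Finally I would close the quasilinear problem by a contraction and then bootstrap. Freezing the coefficient at a given $v$ in the solution class defines a map $v\mapsto u$ through the linear theory; using the maximal-regularity estimate, whose constant can be absorbed as $t_1\to 0$, together with the continuity (and, on bounded sets, Lipschitz) dependence of $DF(\cdot)$ on the frozen state coming from the $C^2$ regularity of $H$ and the two-sided bounds in \textbf{[H4]}, this map is a contraction on a small ball for $t_1$ small, giving the unique local solution $u\in C^{1+\alpha}([0,t_1];L^2(I))\cap C^{\alpha}([0,t_1];H^2(I))$. The upgrade to $\dt u,\dxx u\in C^{\alpha_1}([0,t_1];C^0(\bar I))$ then follows from the one-dimensional embeddings $H^2(I)\hookrightarrow C^1(\bar I)$ and $H^1(I)\hookrightarrow C^{0,1/2}(\bar I)$, reading $\dxx u$ off the equation and $\dt u$ from the time regularity, with the small loss $\alpha_1<\alpha$ produced by interpolation. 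The main obstacle is the verification step: establishing normal ellipticity of the nonsymmetric product $A(x)DF(u)$ uniformly in $u$ and, above all, checking the Lopatinskii--Shapiro condition for the componentwise-mixed boundary conditions, for which the endpoint compatibility conditions in \textbf{[H1]} are indispensable.
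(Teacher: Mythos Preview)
The paper does not give a proof of this statement: Theorem~\ref{thm:AT} is quoted directly from Acquistapace--Terreni \cite{acquistapace1988quasilinear} as one of the two ``building blocks'' for the main result, and no argument beyond the citation appears in the text. Your proposal is therefore not competing with an existing proof in the paper but rather filling in what the authors take for granted, namely that hypotheses \textbf{[H1]}--\textbf{[H4]} place the problem within the scope of \cite{acquistapace1988quasilinear}.

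As such a verification, your outline is sound and hits the right points: the quasilinear recasting with principal part $A(x)DF(u)\dxx u$, the observation that $A\,DF(u)$ is similar to the symmetric positive definite matrix $A^{1/2}DF(u)A^{1/2}$ and hence normally elliptic, the lift to homogeneous boundary data, the role of the endpoint compatibility conditions in \textbf{[H1]} for the complementing condition, and the $C^\alpha$ dependence through $M(t)$. One small point to tighten: you justify the H\"older-in-time regularity of the lift $\bar u$ by invoking $a\in W^{1,p}$ from \textbf{[H2]}; strictly speaking \textbf{[H2]} only gives $a'\in L^p$ for $p\in[1,2]$, and it is the case $p=2$ together with Morrey on bounded time intervals that yields $a\in C^{0,1/2}_{\mathrm{loc}}$, which is what feeds the H\"older exponent $\alpha<1/2$ in the conclusion. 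Making that explicit would close the only loose thread in your sketch.
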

The following theorem, combined with uniform-in-time estimates, allows to show global existence of classical solutions for a relatively wide class of cross-diffusion systems (see, for example, \cite{alasio2018stability, alasio2019trend} and references therein).
\begin{theor}[Criterion for global existence,  \cite{amann1989dynamic}]\label{thm:amann}
Let hypotheses {\bf{[H1]}-\bf{[H4]}} hold and consider a solution $u$ of problem \eqref{eq:sysgen0}-\eqref{eq:ic}. Let $J(u_0)$ denote the maximal time interval of definition of $u$.
If there exists an exponent $\eps\in (0,1)$ (not depending on time) such that
$$
u\in C^\eps(J(u_0)\cap[0,T];C^0(\bar{I})),
$$
Then $u$ is a global solution, i.e. $J(u_0)=[0,\infty)$.
\end{theor}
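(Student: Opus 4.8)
The plan is to realise \eqref{eq:sysgen0}--\eqref{eq:ic} as an abstract quasilinear parabolic Cauchy problem in the sense of Amann and then to run a continuation-by-contradiction argument. First I would rewrite the system in the form
\[
\dt u + \mathcal A(u)\,u = g(t), \qquad u(0)=u_0,
\]
on the base space $E_0 := L^2(I)^N$, where $\mathcal A(v)$ denotes the linear, $v$-dependent elliptic operator $\mathcal A(v)w := -\dx\bigl(A(x)\,DF(v)\,\dx w\bigr)$ equipped with the boundary conditions \eqref{eq:bc}, and $g$ collects the first-order flux $\dx(MF(u))$ together with the inhomogeneous boundary contribution carried by $a(t)$. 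By {\bf[H1]} the matrix $A$ is symmetric and uniformly elliptic, while {\bf[H4]} gives $DF=\he(H)$ symmetric and uniformly positive definite; hence for each fixed $v$ the product $A(x)\,DF(v)$ has spectrum in the right half-plane, so $\mathcal A(v)$ is sectorial and generates an analytic semigroup on $E_0$, with common regularity space $E_1 := \{w\in H^2(I)^N : w \text{ satisfies } \eqref{eq:bc}\}$. This is exactly the framework in which Amann's theory applies, and Theorem \ref{thm:AT} supplies the required local solvability; the phase space may be taken as a continuous interpolation space $E_\gamma := (E_0,E_1)_\gamma$ with $\gamma\in(1/4,1/2)$, so that in one dimension $E_\gamma \hookrightarrow C^0(\bar I)^N$ with compact embedding.

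With this setup, $u$ admits a unique maximal solution on $J(u_0)=[0,T^*)$, and I would invoke Amann's continuation principle \cite{amann1989dynamic}: if $T^*<\infty$, then the orbit $\{u(t):0\le t<T^*\}$ must leave every compact subset of the phase space $E_\gamma$ as $t\uparrow T^*$. The alternative blow-up mode, in which the solution exits the set where $\mathcal A(\cdot)$ remains sectorial, does not occur here, since by {\bf[H1]} and {\bf[H4]} the coefficients are globally smooth and uniformly elliptic for every value of $u$. It therefore suffices to show that the hypothesis $u\in C^\eps(J(u_0)\cap[0,T];C^0(\bar I))$, with $\eps$ independent of $T$, keeps the orbit inside a fixed compact subset of $E_\gamma$.

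Assume, for contradiction, that $T^*<\infty$ and take $T\ge T^*$ in the hypothesis. Uniform H\"older continuity in time with values in $C^0(\bar I)$ yields at once a uniform bound $\sup_{0\le t<T^*}\nm{u(t)}_{C^0(\bar I)}=:R<\infty$ and the existence of a trace $u^*:=\lim_{t\uparrow T^*}u(t)$ in $C^0(\bar I)$. I would then upgrade this weak control to control in the phase space through the smoothing estimates of the analytic semigroup: for $t$ bounded away from $0$, parabolic regularisation bounds $\nm{u(t)}_{E_{\gamma'}}$, for some $\gamma'\in(\gamma,1)$, in terms of $R$ and the data in {\bf[H2]}, uniformly up to $T^*$. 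Since $E_{\gamma'}\hookrightarrow E_\gamma$ compactly, the orbit on any $[t_0,T^*)$ with $t_0>0$ is relatively compact in $E_\gamma$; interpolating the time-H\"older bound in $C^0$ against the $E_{\gamma'}$ bound shows moreover that $t\mapsto u(t)$ is uniformly continuous on $[t_0,T^*)$ with values in $E_\gamma$, so $u(t)\to u^*$ in $E_\gamma$. Consequently the orbit remains in a compact subset of $E_\gamma$, contradicting the continuation principle; hence $T^*=\infty$, i.e. $J(u_0)=[0,\infty)$.

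The crux of the argument is the upgrade carried out in the third paragraph: one must bound the phase-space norm, and in effect secure a lower bound on the local existence time, purely in terms of the $C^0$ norm (equivalently the $C^\eps$-in-time, $C^0$-in-space norm) that the hypothesis actually controls, rather than in terms of a stronger norm that could deteriorate as $t\uparrow T^*$. This is precisely where the analytic-semigroup smoothing and maximal-regularity estimates underlying Theorem \ref{thm:AT} are indispensable, and it explains why $\eps$ is required to be independent of time: time-independence of $\eps$ is what keeps the smoothing estimates uniform up to the maximal time $T^*$ and thereby closes the contradiction.
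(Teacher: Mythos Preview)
The paper does not prove this theorem: it is quoted verbatim from Amann \cite{amann1989dynamic} and used as a black box, with only a remark afterwards translating Amann's notation into the present setting. So there is no ``paper's own proof'' to compare against; your proposal is an attempt to reconstruct the argument behind a cited result.

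As a sketch of Amann's continuation principle your outline is along the right lines---recast the system as an abstract quasilinear parabolic problem, invoke the blow-up alternative, and show that the H\"older-in-time $C^0$-in-space bound forces the orbit to remain precompact in the phase space. However, the decisive step, the ``upgrade'' in your third paragraph, is asserted rather than proved. You write that ``parabolic regularisation bounds $\nm{u(t)}_{E_{\gamma'}}$ \ldots\ in terms of $R$ and the data'', but this is exactly the nontrivial content of Amann's theorem: one must show that for a \emph{quasilinear} equation, where the principal part depends on $u$ itself, a mere $C^0(\bar I)$ bound (plus H\"older continuity in $t$) suffices to control a higher interpolation norm uniformly up to $T^*$. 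For a semilinear equation this would follow from standard smoothing of the fixed analytic semigroup, but here the generator $\mathcal A(u(t))$ varies with the solution, and the smoothing estimates for the associated evolution operator require precisely the time-H\"older modulus on the coefficients that the hypothesis supplies. Your sketch gestures at this in the last paragraph but does not carry it out; a complete argument needs the maximal-regularity machinery and the evolution-operator estimates developed in \cite{amann1989dynamic}, which is why the present paper simply cites the result.
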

\begin{rem}[Notation]
Notice that, comparing Theorem \ref{thm:amann} with the original statement in \cite{amann1989dynamic}, we have that $\gg = \RR^N$.
Additionally, in our case the function $f$ introduced in \cite{amann1989dynamic} is \quot{affine in the gradient} in the sense specified therein. Finally, we do not use the notation $BUC^\eps$ to denote the space of \quot{bounded, uniformly $\eps$-H\"older continuous functions}.
\end{rem}

\section{Estimates for the general system}\label{sec:estimates}

In the present section we are going to derive the crucial estimates for solutions of system \eqref{eq:sysgen0}.

\begin{prop}\label{prop:est1}
Let hypotheses {{\bf[H1]}-{\bf[H4]}} hold.
There exists a constant $C_0>0$ independent of $A$ such that for any $T >0$, any solution $u$ of \eqref{eq:sysgen0}-\eqref{eq:ic} satisfies
\begin{equation}
 \int_I |u(T,x)|^2 \d x 
 +\mu \int_0^T \int_I | \de_x F(u)|^2 \d x \d t \le C_0.
\end{equation}
\end{prop}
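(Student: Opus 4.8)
The plan is to derive the estimate by testing the system against a natural quantity built from the entropy $H$, exploiting hypothesis {\bf[H4]}. The key observation is that $F(u)=DH(u)$, so $\de_t H(u) = F(u)\cdot\de_t u$. Multiplying the $k$-th equation of \eqref{eq:sysgen0} by $F^k(u)$, summing over $k$, and integrating over $I$, the left-hand side becomes $\frac{\d}{\d t}\int_I H(u)\,\d x$. On the right-hand side I integrate by parts in $x$: the term $\int_I F(u)\cdot\de_x(M F(u))\,\d x$ vanishes (or is controlled) because $M$ is symmetric with zero diagonal, so $F(u)\cdot M F(u)$ has no definite sign issue — in fact $\de_x(F(u)\cdot MF(u))$ integrates to boundary terms only, and the bilinear structure together with $M^{kk}=0$ makes the interior contribution manageable; meanwhile $-\int_I \de_x F(u)\cdot A\,\de_x F(u)\,\d x \le -\mu\int_I|\de_x F(u)|^2\,\d x$ by the ellipticity of $A$.

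The main technical point is handling the boundary terms produced by the integrations by parts, using the boundary conditions \eqref{eq:bc} and the compatibility conditions on $A$ in {\bf[H1]}. For $k>2$ one has $F^k(u)=0$ at both endpoints, so those boundary contributions drop. For $k=1$, $F^1(u(t,\ellp))=a(t)$ while $(\de_x F^1)(u(t,\ellm))=0$; for $k=2$ the roles of the endpoints are swapped. The compatibility conditions $A^{1k}(\ellm)=0$ for $k\neq1$ and $A^{2k}(\ellp)=0$ for $k\neq2$ are exactly what is needed so that the diffusive boundary terms involving $A\,\de_x F(u)$ collapse to the surviving diagonal-type contributions, which are then either zero (by the Neumann conditions) or proportional to $a(t)$ times a flux at $\ellp$. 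The $M$-boundary term likewise reduces to something of the form $a(t)$ times $(MF(u))^{1}$ evaluated at $\ellp$, i.e. a bounded-in-$t$ combination of boundary values of $F$; using $\|a\|_{L^\infty}\le a_0$ from {\bf[H2]} and a trace/interpolation inequality ($\|F(u)\|_{L^\infty(I)}\le \eps\|\de_x F(u)\|_{L^2(I)} + C_\eps\|F(u)\|_{L^2(I)}$ in one dimension), these boundary terms are absorbed: a small multiple of $\|\de_x F(u)\|_{L^2}^2$ into the good dissipative term, and the remainder controlled by $\|F(u)\|_{L^2(I)}^2 \le \Lambda^2\|u\|_{L^2(I)}^2/\lambda^2$ (using \eqref{eq:monotone} with $\xi_2=0$, $F(0)=0$) plus a constant depending on $a_0$.

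Putting this together yields a differential inequality of the form
\begin{equation*}
\frac{\d}{\d t}\int_I H(u)\,\d x + \frac{\mu}{2}\int_I |\de_x F(u)|^2\,\d x \le C_1\int_I |u|^2\,\d x + C_2,
\end{equation*}
where $C_1,C_2$ depend only on $\lambda,\Lambda,\mu,a_0$ and $|I|$, not on $A$ beyond $\mu$. Since $H$ is convex with $H\ge 0$ and $\he(H)\ge\lambda\Id$, one has the two-sided bound $\frac{\lambda}{2}|u|^2 \le H(u) \le \frac{\Lambda}{2}|u|^2$ (again from {\bf[H4]}, expanding around $0$ and using $F(0)=0$, $H(0)\ge0$; if $H(0)>0$ replace $H$ by $H-H(0)$). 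This turns the inequality into $\frac{\d}{\d t}E(t) + c\,\|\de_x F(u)\|_{L^2}^2 \le C_1' E(t) + C_2$ with $E(t)=\int_I H(u)\,\d x$. The naive Gr\"onwall argument only gives exponential growth, so the final step — and the one requiring care — is to close the estimate \emph{uniformly in time}. For this I would additionally use the dissipation term together with the Poincar\'e-type inequality available because $F^2(u(t,\ellm))=0$ (so $F(u)$, hence $u$, is controlled in $L^2$ by $\de_x F(u)$ in $L^2$ up to the $a(t)$ contribution at the other endpoint): this upgrades $C_1 E(t)$ on the right to be reabsorbed (possibly after shrinking $c$) into the dissipative term, leaving $\frac{\d}{\d t}E(t) + c'\,E(t)\le C_2'$, whence $E(t)\le \max\{E(0), C_2'/c'\}$ for all $t$, and integrating the dissipation inequality on $[0,T]$ gives the claimed uniform bound on $\int_0^T\!\!\int_I|\de_x F(u)|^2$. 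The constant $C_0$ then depends on $\|u_0\|_{L^2}$, $\lambda$, $\Lambda$, $\mu$, $a_0$ and $|I|$ only. The delicate part of the whole argument is precisely this interplay between the boundary conditions and the Poincar\'e inequality that makes the bound time-independent rather than merely locally-in-time finite.
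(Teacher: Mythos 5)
Your opening moves coincide with the paper's: test against $F(u)$, write $F(u)\cdot\de_t u=\de_t H(u)$, use the symmetry of $M$ and $M^{kk}=0$ to turn the first-order term into a pure boundary contribution, and use ellipticity of $A$ for the dissipation. The boundary bookkeeping with the compatibility conditions in {\bf[H1]} is also right: after everything cancels, what survives is
\[
a(t)\Bigl(M^{12}F^2(u(t,\ellp))+\textstyle\sum_{l\neq 2}A^{1l}(\ellp)\,\de_xF^l(u(t,\ellp))\Bigr).
\]
The genuine gap is that you never actually estimate the second piece, $a(t)\sum_{l\neq2}A^{1l}(\ellp)\,\de_xF^l(u(t,\ellp))$. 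You acknowledge that a ``flux at $\ellp$'' survives, but your subsequent trace/interpolation argument only treats boundary values of $F(u)$ itself (the $M$-term). A pointwise boundary value of $\de_xF(u)$ cannot be controlled by $\|\de_xF(u)\|_{L^2(I)}$ --- that would require an $H^2$-type bound on $F(u)$, which is exactly what Proposition 3.2 establishes \emph{later, using} Proposition 3.1. So the step ``these boundary terms are absorbed'' fails for the diffusive flux, and the differential inequality you write down is not justified. The paper's way out is structural rather than analytic: integrating the $k=1$ equation over $I$ shows that the entire surviving bracket equals $\de_t\int_I u^1\,\d x$; one then integrates $\int_0^T a(t)\,\de_t\int_I u^1\,\d x\,\d t$ by parts in time, transferring the derivative onto $a$, and closes with Gr\"onwall using the integrable weight $|a'(t)|$ (this is precisely where the $L^1(0,\infty)$ bound on $a'$ from {\bf[H2]} enters --- a hypothesis your argument never uses, which is a sign something is missing).

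A secondary issue is the final absorption step. Your route to time-uniformity (Poincar\'e plus absorbing the zeroth-order term into the dissipation to get $\frac{\d}{\d t}E+c'E\le C_2'$) is a legitimate alternative mechanism in principle, but the phrase ``possibly after shrinking $c$'' goes the wrong way: to reabsorb $C_1E(t)$ you need the coefficient $C_1$ (times the Poincar\'e constant and the constants from \eqref{eq:monotone}) to be \emph{smaller} than the dissipation constant, not the other way around; this works only because $C_1$ comes from an $\eps$-Young splitting and can be taken small at the price of enlarging $C_2$ --- that should be said explicitly. The paper avoids this entirely: it gets $\frac{\lambda}{4}\int_I|u(T)|^2+\mu\int_{Q_T}|\de_xF(u)|^2\le C(T)+\int_0^T|a'(t)|\int_I(u^1)^2$, and uniformity in $T$ comes from $\int_0^\infty|a'|\le a_1$ in the Gr\"onwall factor $\exp(a_1)$, with no absorption needed.
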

\begin{proof}
Thanks to Theorem \ref{thm:AT}, we know that classical solutions exist on a (possibly short) time interval $[0,t_1]$, and therefore the maximal time interval of existence of $u$, denoted by $J(u_0)$, is well defined.

Let $t\in [0,T] \subseteq J(u_0)$; we test \eqref{eq:sysmatrix} against $F(u)$ and integrate over $I$:
\begin{equation} \label{eq:u1}
\int_I F(u) \cdot \de_t u \d x 
= \int_I \left\{ F(u) \cdot M \de_x F(u) +  F(u) \cdot \de_x (A \de_x F(u)) \right\} \d x
\end{equation}
Recall that, by assumption {\bf[H4]}, $F$ has a primitive, namely $F(u)= DH(u)$, where $H$ is a convex and non-negative scalar function. Therefore, we can write
$$
\int_I F(u) \cdot \de_t u \d x = \int_I \de_t H(u) \d x.
$$
Moreover,  since $M$ is symmetric and independent of $x$, we have
\begin{align*}
     F(u) \cdot M \de_x F(u) = \frac{1}{2} \de_x (F(u) \cdot M F(u)).
\end{align*}
Thus,  after an integration by parts, \eqref{eq:u1} becomes
\begin{align}\label{eq:est1}
 & \int_I(\de_t H(u) +F(u)\cdot A \de_x F(u) )\d x 
 \nonumber \\
 &= \left( \frac{1}{2}F(u) \cdot M F(u)+F(u) \cdot A \de_x F(u) \right) \biggr |_{\de I}.
\end{align}
We evaluate the boundary term in \eqref{eq:est1}. 
Since we have homogeneous Dirichlet boundary conditions
$
F^k(u(t,\ellm)) = F^k(u(t,\ellp))=0,
$
for $k > 2$, we obtain
\begin{align*}
&  \left( \frac{1}{2}F(u) \cdot M F(u)+F(u) \cdot A \de_x F(u) \right) \biggr |_{\de I}
\\
&= \left(\frac{1}{2}\sum_{l,k=1}^N M^{kl} F^k(u) F^l(u) +  \sum_{l,k=1}^N A^{lk} F^k(u) \de_x F^l(u) \right)\biggr |_{\de I}
\\
&=  \left( M^{12}F^1(u)F^2(u) + \sum_{l=1}^N (A^{l1}F^1(u)\de_xF^l(u)+A^{l2}F^2(u)\de_xF^l(u)) \right) \biggr |_{\de I}.
\end{align*}
Moreover, since $\de_x F^1(u(t,\ellm))=F^2(u(t,\ellm))=\de_x F^2(u(t,\ellp)) = 0$ and $F^1(u(t,\ellp)) = a(t)$, we have
\begin{align*}
& \left( M^{12}F^1(u)F^2(u) + \sum_{l=1}^N (A^{l1}F^1(u)\de_xF^l(u)+A^{l2}F^2(u)\de_xF^l(u)) \right) \biggr |_{\de I}
\\
&=  a(t) \left( M^{12} F^2(u(t,\ellp))+ \sum_{l \neq 2}A^{l1}(\ellp) \de_x F^l(u(t,\ellp)) \right)
\\
&\qquad + \sum_{l\neq 2} A^{l2}(\ellp)F^2(u(t,\ellp))\de_x F^l(u(t,\ellp))
\\
& \qquad-\sum_{l \neq 1} A^{l1}(\ellm)F^1(u(t,\ellm))\de_x F^l(u(t,\ellm)).
\end{align*}
Finally, since $A^{l2}(\ellp)=0$ for $l \neq 2$ and $A^{l1}(\ellm)=0$ for $l \neq 1$, we obtain
\begin{align}\label{eq:survivingbc}
    &\left( \frac{1}{2}F(u) \cdot M F(u)+F(u) \cdot A \de_x F(u) \right) \biggr |_{\de I} \nonumber
    \\
    & = a(t) \left( M^{12} F^2(u(t,\ellp))+ \sum_{l \neq 2}A^{l1}(\ellp) \de_x F^l(u(t,\ellp)) \right).
\end{align}
We deduce the value of the last bracket in \eqref{eq:survivingbc} by integrating equation \eqref{eq:sysgen0} for $k=1$ with respect to $x\in I$:
\begin{equation}
M^{12} F^2(u(t,\ellp))+ \sum_{l \neq 2}A^{l1}(\ellp) \de_x F^l(u(t,\ellp)) = \de_t \int_I u^1 \d x.
\end{equation}
Given the boundary terms above, integrating \eqref{eq:est1} in time, we obtain
\begin{align} \label{eq:split}
\begin{split}
 &    \int_I  H(u(T,x))  \d x+ \int_0^T \int_I \de_x F(u) \cdot A \de_x F(u) \d x \d t 
 \\
& =\int_I H(u_0) \d x+ \int_0^T a(t) \de_t \int_I u^1 \d x \d t
\\
 &     =\int_I H(u_0) \d x+\left( a(t) \int_I u^1\d x \right)\biggr |_{t=0}^{t=T} - \int_0^T a'(t) \int_I u^1\d x \d t.
      \end{split}
\end{align}
Let us estimate the last two terms in \eqref{eq:split}, in particular we have
\begin{align}\label{eq:estc}
a(T)\int_I u^1(T,x)\d x  
&\le \frac{\ellp-\ellm}{\lambda}
a(T)^2 +\frac{\lambda}{4}\int_I (u^1(T,x))^2\d x,
\end{align}
and, using Young's inequality,
\begin{equation}\label{eq:est}
-\int_0^T a'(t) \int_I u^1 \d x \d t \le  
\frac{\ellp-\ellm}{4}\int_0^T |a'(t)| \d t 
+ \int_0^T |a'(t)| \int_I(u^1)^2 \d x \d t.
\end{equation}
We recall that, by assumption {\bf [H1]}, it holds
$
\de_x F(u) \cdot A \de_x F(u) \ge \mu |\de_x F(u)|^2
$
and, additionally, by assumption {\bf [H4]}, we have
$
H(u) \ge \frac{\lambda}{2} |u|^2.
$
Therefore, combining inequalities \eqref{eq:split}, \eqref{eq:estc} and \eqref{eq:est},
we obtain the following estimate:
\begin{align}\label{eq:estd}
\frac{\lambda}{4} \int_I 
&|u(T,x)|^2 \d x +\int_0^T \int_I \mu|\de_x F(u)|^2 \d x \d t 
\nonumber \\
&\le C(T) 
+ \int_0^T |a'(t)| \int_I (u^1)^2 \d x \d t,
\end{align}
where, using assumption 
{\bf [H2]}, we have
\begin{align*}
C(T) 
&= \int_I H(u_0) \d x
+
(\ellp-\ellm)\left(
\frac{1}{\lambda}
a(T)^2
+
\frac{1}{4}\int_0^T |a'(t))| \d t
\right)
\\
&\leq
\int_I H(u_0) \d x
+
(\ellp-\ellm)\left(
\frac{1}{\lambda}
a_0^2
+
\frac{1}{4}a_1
\right).
\end{align*}
Finally, thanks to \eqref{eq:estd}, we apply Gr\"onwall's inequality
and obtain 
\begin{align}
\frac{\lambda}{4} \int_I 
&
|u(T,x)|^2 \d x 
+\mu \int_0^T \int_I |\de_x F(u)|^2 
\d x \d t 
\nonumber \\
&\leq 
C(T) 
\exp\left(\int_0^T |a'(t)| \d t\right)
\nonumber \\
&\leq 
\left(
\int_I H(u_0) \d x
+
(\ellp-\ellm)\left(
\frac{1}{\lambda}
a_0^2
+
\frac{1}{4}a_1
\right)
\right)
\exp(a_1).
\end{align}
Thus each term is bounded by a constant independent of $T$.
\end{proof}
In the next Proposition we will obtain stronger estimates involving first derivatives in time and second derivatives in space.
\begin{prop}\label{prop:est2}
Let hypotheses {\bf{[H1]}-\bf{[H4]}} hold and consider a solution $u$ of \eqref{eq:sysgen0}-\eqref{eq:ic}.
Assume that $\he(H) \geq \lambda \Id$, where
$\lambda > \frac{1}{4\sigma} > \frac{1}{2}$,
for some $\sigma\in (0,\half)$.
Then there exists a constant $C_1>0$ independent of $T$ (but depending on $A,M,u_0, C_0, a, \sigma, \mu, \lambda)$ such that
\begin{equation}
\int_{Q_T}
\left(
 |\dt u|^2
+
|\dx (A \dx F(u))|^2
\right)
\d x\d t
+
\int_I
|\dx F(u)|^2\big|_{t=0}^{t=T}
\d x
\leq
C_1.
\end{equation}
Additionally, $u\in L^2(0,T;H^2(I)) \cap H^1(0,T;L^2(I))$ uniformly for all $T>0$.
\end{prop}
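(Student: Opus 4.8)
The plan is to obtain the desired bounds by testing the equation against $\dt u$ (equivalently, against a time derivative of $F(u)$ after using $\he(H)$), which is the natural way to produce $|\dt u|^2$, $|\dx(A\dx F(u))|^2$ and a boundary-in-time term $\int_I |\dx F(u)|^2$. Concretely, I would differentiate nothing, but instead test \eqref{eq:sysmatrix} directly with $\dt u$: writing $\dt u = \he(H)(u)\,\dt(\text{something})$ is awkward, so I prefer to work with $v := F(u)$ so that $\dt u = (\he H)^{-1}(u)\,\dt v$ and $\dt v = \he(H)(u)\dt u$. Testing $\dt u = \dx(Mv + A\dx v)$ against $\dt v$ gives
\begin{align*}
\int_I \dt u\cdot\he(H)(u)\,\dt u\,\d x
= \int_I \dt v\cdot \dx(Mv)\,\d x + \int_I \dt v\cdot \dx(A\dx v)\,\d x.
\end{align*}
The left-hand side is bounded below by $\lambda\int_I |\dt u|^2\,\d x$ by {\bf[H4]}. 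On the right, the term $\int_I \dt v\cdot\dx(A\dx v)\,\d x$ should be integrated by parts in $x$ to produce $-\int_I \dx\dt v\cdot A\dx v\,\d x = -\frac12\dt\int_I A\dx v\cdot\dx v\,\d x$ plus boundary terms, i.e.\ a clean time derivative of $\int_I A\dx F(u)\cdot\dx F(u)\,\d x$, which is comparable to $\mu\int_I |\dx F(u)|^2\,\d x$ by {\bf[H1]}. Integrating in time then yields the term $\int_I |\dx F(u)|^2\big|_0^T$ we want on the left.

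The remaining right-hand side terms must be controlled by the left-hand side plus time-integrable data. The term $\int_I \dt v\cdot\dx(Mv)\,\d x$ I would bound using that $M=M(t)$ is bounded and $M^{kk}=0$: estimate $|\dx(Mv)| \le \|M\|_\infty |\dx v|$ (note $\dx M = 0$), so $\int_I \dt v\cdot\dx(Mv)\,\d x \le \eps\int_I |\dt v|^2\,\d x + C_\eps\int_I |\dx v|^2\,\d x$, and $|\dt v| = |\he(H)(u)\dt u| \le \Lambda|\dt u|$ (one needs an upper bound on $\he H$; this follows from \eqref{eq:monotone} since $DF = \he H$ and the upper Lipschitz bound gives $\he(H)\le\Lambda\Id$), so this absorbs into $\frac\lambda2\int|\dt u|^2$ if $\eps$ is small, leaving a term $C\int_I|\dx F(u)|^2\,\d x$ which is integrable in time by Proposition~\ref{prop:est1}. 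To recover $\int_{Q_T}|\dx(A\dx F(u))|^2\,\d x\d t$, I would simply read it off the equation: $\dx(A\dx v) = \dt u - \dx(Mv)$, so $|\dx(A\dx v)|^2 \le 2|\dt u|^2 + 2\|M\|_\infty^2|\dx v|^2$, and both terms on the right are now controlled. The final claim $u\in L^2(0,T;H^2(I))\cap H^1(0,T;L^2(I))$ uniformly in $T$ follows because $H^1(0,T;L^2)$ is exactly the $\dt u$ bound, and for the $H^2$ bound one inverts: from {\bf[H1]} $A$ is uniformly elliptic and $C^1$, and $F$ is a $C^1$ diffeomorphism with $\he H \ge \lambda\Id$, so $\dx(A\dx F(u))\in L^2$ together with the already-established $L^\infty$ / $H^1$ control of $u$ (from Proposition~\ref{prop:est1} and Sobolev embedding in $d=1$) gives $\dxx u \in L^2(I)$ by elliptic regularity / a direct chain-rule computation, uniformly in time.

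The main obstacle is the careful treatment of the boundary terms generated by the two integrations by parts in $x$, one in the identity above for $\int_I \dt v\cdot\dx(A\dx v)\,\d x$ (which produces $\dt v\cdot A\dx v\big|_{\de I}$) and possibly one more if one manipulates the $M$-term. These boundary terms involve $\dt F^k(u)$ and $\dx F^k(u)$ at $\ellpm$; using the boundary conditions \eqref{eq:bc} and the compatibility conditions on $A$ in {\bf[H1]} (exactly as in the proof of Proposition~\ref{prop:est1}), most of them vanish, but the surviving piece will be proportional to $a'(t)$ times a trace of $u$ or $F(u)$ — for instance $\dt F^1(u(t,\ellp)) = a'(t)$ enters. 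One then controls this using {\bf[H2]} ($a'\in L^p$, $p\in[1,2]$), the uniform $L^2$ bound from Proposition~\ref{prop:est1}, the one-dimensional trace/interpolation inequality $\|w\|_{L^\infty(I)}^2 \le C\|w\|_{L^2(I)}\|w\|_{H^1(I)} + C\|w\|_{L^2(I)}^2$ applied to $w = \dx F(u)$, and Young's inequality to absorb the resulting $\int_I|\dx(A\dx F(u))|^2$-type term (after relating $\dxx(\cdot)$ back through ellipticity) with a small constant into the left-hand side — this is where the quantitative hypothesis $\lambda > \frac1{4\sigma} > \frac12$ is used, to guarantee the absorption constants work out. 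The final step is a Gr\"onwall argument in time, with the $L^1_t$-in-time forcing coming from $\int_0^T|a'(t)|\,\d t \le a_1$ and from the $\int_I|\dx F(u)|^2$ term which, by Proposition~\ref{prop:est1}, has bounded time integral; this produces $C_1$ independent of $T$.
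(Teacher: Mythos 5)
Your proposal is correct in substance but follows a genuinely different route from the paper. The paper tests the equation against the combined quantity $\dt F(u)-\Xi$, where $\Xi=\dx(A\dx F(u))$, so that both $\int_{Q_T}|\dt u|^2$ and $\int_{Q_T}|\Xi|^2$ appear directly on the left-hand side; the price is a cross term $-\int_{Q_T}\dt u\cdot\Xi$, whose absorption via Young's inequality is exactly what forces the quantitative hypothesis $\lambda>\frac{1}{4\sigma}>\frac12$ (it keeps the coefficient $1-\frac{1}{4\sigma\lambda}$ of $|\Xi|^2$ positive). You instead test against $\dt F(u)$ alone and recover $\int_{Q_T}|\Xi|^2$ a posteriori from the identity $\Xi=\dt u-\dx(MF(u))$; this is more elementary, avoids the cross term entirely, and --- as you half-notice --- does not actually need the hypothesis on $\lambda$ and $\sigma$ anywhere (the only absorption you need, $\eps\Lambda^2<\lambda$, is free). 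Your treatment of the boundary term is where the two proofs could diverge: the surviving piece is $a'(t)\sum_{l\neq2}A^{1l}(\ellp)\dx F^l(u(t,\ellp))$, and the paper eliminates the trace of $\dx F$ by substituting the $x$-integrated $k=1$ equation, turning it into $a'(t)\bigl[\int_I\dt u^1\,\d x-M^{12}F^2(u(t,\ellp))\bigr]$, which is then handled by Young plus Morrey and Proposition \ref{prop:est1}. Your alternative --- interpolating $\|\dx F\|_{L^\infty(I)}^2\lesssim\|\dx F\|_{L^2}\|\dx F\|_{H^1}$ and absorbing an $\eps\int_{Q_T}|\Xi|^2$-type remainder \quot{into the left-hand side} --- needs one extra step in your scheme, since $|\Xi|^2$ is not on your left-hand side: you must first convert $\eps\int_{Q_T}|\Xi|^2$ back into $2\eps\int_{Q_T}|\dt u|^2+2\eps\|M\|_\infty^2\int_{Q_T}|\dx F(u)|^2$ via the equation before absorbing. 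That is a small but real wrinkle you should spell out (or simply adopt the paper's integrated-equation trick, which you already use implicitly in Proposition \ref{prop:est1}). The final regularity claims are argued the same way in both proofs.
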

\begin{rem}
Notice that the condition $\lambda>\frac{1}{2}$ can be removed by re-scaling the time variable in equation \eqref{eq:sysgen0}.
\end{rem}
\begin{proof}
Given $T\in J(u_0)$, we test the general system
\begin{equation}\label{eq:sysagain}
\de_t u = \de_x \left(
M F(u) + A \de_x F(u)
\right)
\end{equation}
(with $A=A(x)$ and $M=M(t)$) against $\dt F(u) - \Xi$, where $\Xi = \dx (A \dx F(u))$; namely we obtain
\begin{align}\label{eq:testeq}
& \int_{Q_T}
\left[\dt u - \Xi \right]
\cdot
\left[\dt F(u) - \Xi \right]
\d x\d t
\nonumber \\
&=
\int_{Q_T}
\dx (MF(u))
\cdot
\left[\dt F(u) - \Xi \right]
\d x\d t.
\end{align}
Let us denote the left-hand side and right-hand side of \eqref{eq:testeq} by $\ll$ and $\rr$ respectively. We are going to estimate the following term from below:
$$
\ll =
\int_{Q_T}
\left[
\he(H)  \dt u \cdot  \dt u
+
|\Xi|^2
-
(\dt u + \dt F(u))\cdot \Xi \right]
\d x\d t,
$$
in particular, we estimate the two \quot{mixed terms} separately. For the first one we have 
$$
-\int_{Q_T} \dt u \cdot \Xi
\d x\d t
\geq 
- \int_{Q_T}
\left[
\sigma \lambda
|\dt u |^2
+
\frac{1}{4\sigma \lambda}
|\Xi|^2
\right]
\d x \d t,
$$
whereas for the second one we have 
\begin{align*}
-\int_{Q_T} \dt F(u) \cdot \Xi
\d x\d t
&=
-\int_{Q_T} \dt F(u) \cdot \dx (A \dx F(u))
\d x\d t
\\
&=
\int_{Q_T} \frac{1}{2}  \dt 
\left( \dx F(u) \cdot A \dx F(u)
\right)
\d x\d t
\\ & \qquad -
\int_0^T 
(
 \dt F(u) \cdot A \dx F(u)
)\bigg|_{\de I}
\d t.
\end{align*}
We evaluate the boundary term above, indeed, using equation \eqref{eq:survivingbc}, we have
\begin{align*}
\de_tF(u)\cdot A \de_x F(u) \biggr |_{\de I} 
&= \sum_{k,l= 1}^N \left( \de_t F^k(u) A^{kl}\de_xF^l(u) \right) \biggr |_{\de I}
\\
& = \de_t F^1(u(t,\ellp)) \sum_{l \neq 2} A^{1l}(\ellp)\de_xF^l(u(t,\ellp))
\\
& = a'(t) \left[ \int_I \de_t u^1 \d x - M^{12}F^2(u(t,\ellp)) \right].
\end{align*}

Thus, the left-hand side of \eqref{eq:testeq} satisfies the following inequality:
\begin{align}\label{eq:estlhs}
\ll &\geq 
\int_{Q_T}
\left[
\lambda(1-\sigma)  |\dt u|^2
+
\left(1-\frac{1}{4\sigma \lambda}\right)
|\Xi|^2
+
\frac{1}{2} \dt 
\left( A \dx F(u) \cdot \dx F(u)
\right)
\right]
\d x\d t
\nonumber \\
&\quad -
\int_0^T a'(t) 
\left[ \int_I \de_t u^1 \d x  - M^{12}F^2(u(t,\ellp)) \right]
 \d t.
\end{align}
Concerning the right-hand side of \eqref{eq:testeq}, using Young's inequality we obtain
\begin{align}\label{eq:estrhs}
 \rr &\leq 
\frac{1}{2}\left( \frac{1}{\lambda(1-\sigma)} + 
\left(1-\frac{1}{4\sigma \lambda}\right)^{-1} \right)
\int_{Q_T}
|\dx (M F(u))|^2
\d x\d t
\nonumber \\
&\quad +
\frac{1}{2}
\int_{Q_T}
\left[
 \lambda(1-\sigma)  |\dt u|^2
+
\left(1-\frac{1}{4\sigma \lambda}\right)
|\Xi|^2
\right]
\d x\d t.
\end{align}
Combining the estimates for $\ll$ and $\rr$ (i.e. \eqref{eq:testeq}, \eqref{eq:estlhs} and \eqref{eq:estrhs}), we have 
\begin{align*}
&
\int_{Q_T}
\left[
\lambda(1-\sigma)  |\dt u|^2
+
\left(1-\frac{1}{4\sigma \lambda}\right)
|\Xi|^2
\right]
\d x\d t
+
\int_I
 A \dx F(u) \cdot \dx F(u)\big|_{t=0}^{t=T}
\d x
\\
&\leq
K_{\lambda \sigma}
\int_{Q_T}
|\dx (M F(u))|^2
\d x\d t
+
\int_0^T a'(t) 
\left[ \int_I \de_t u^1 \d x  - M^{12}F^2(u(t,\ellp)) \right]
 \d t,
\end{align*}
where 
$K_{\lambda \sigma} =  \frac{1}{\lambda(1-\sigma)} + \frac{4\sigma \lambda}{4\sigma \lambda-1} $.
We recall that, for $k\geq 2$, we have $F^k(u(t,\ellm)=0$, hence, by Poincar\'e's inequality (with constant $C_P$), we obtain
$$
\int_I |F^k(u)|^2 \d x
\leq C_P 
\int_I |\de_x F^k(u)|^2 \d x, \quad \forall k\geq 2.
$$
Using Morrey's inequality (with constant $C_S$) and Proposition \ref{prop:est1}, we get
$$
\int_0^T \nm{F^2(u(t,\cdot)}_{L^\infty(I)}^2   \d t
\leq
C_S \int_0^T \nm{F^2(u(t,\cdot)}_{H^1(I)}^2   \d t
\leq (1 + C_P)C_SC_0.
$$
Consequently, we deduce that
\begin{align*}
- & \int_0^T a'(t) 
 M^{12} F^2(u(t,\ellp) )  \d t
\\
&\leq 
\nm{M^{12}}_{L^\infty(0,\infty)}
\left(
\int_0^T a'(t)^2 \d t  
\right)^{\frac{1}{2}}
 \left(\int_0^T \nm{F^2(u(t,\cdot)}_{L^\infty(I)}^2   \d t
\right)^{\frac{1}{2}}
\\
&\leq
\nm{M^{12}}_{L^\infty(0,\infty)}
a_1
\sqrt{\frac{(1 + C_P)C_SC_0}{\mu}},
\end{align*}
where $a_1$ was introduced in {\bf{[H2]}}. Thanks to Proposition \ref{prop:est1}, we also have
$$
\quad\int_{Q_T}
|\dx (M F(u))|^2
\d x\d t
\leq \nm{M}_{L^\infty(0,\infty)}^2 \frac{C_0}{\mu}.
$$
Similarly, we also have the bound
\begin{align*}
\int_{Q_T} a'(t) 
\de_t u^1 \d x \d t
&\leq 
\frac{1}{4\lambda\sigma}a_1^2
+
\lambda\sigma \int_{Q_T} 
|\de_t u^1|^2 \d x \d t.
\end{align*}
Finally, we deduce that
\begin{align*}
\quad & \lambda(1-2\sigma)  
\int_{Q_T} |\dt u|^2 \d x\d t
\\
&\qquad+
\left(1-\frac{1}{4\sigma \lambda}\right)
\int_{Q_T} |\Xi|^2 \d x\d t
+
\mu \int_I
|\dx F(u(T))|^2
\d x
\\
&\leq
K_{\lambda \sigma}
\frac{1}{4\lambda\sigma}a_1^2
+
\nm{M^{12}}_{L^\infty(0,\infty)}
a_1
\sqrt{\frac{(1 + C_P)C_SC_0}{\mu}}
\\
& \qquad
+
\nm{M}_{L^\infty(0,\infty)}^2 \frac{C_0}{\mu}
+
\frac{1}{\mu} \int_I
|\dx F(u_0)|^2
\d x,
\end{align*}
where all constants are independent of time.
Notice that, since we have obtained a bound for $\dt u$, we can use equation \eqref{eq:sysagain} and Proposition \ref{prop:est1} to deduce that $F(u)\in L^2(0,T;H^2(I))$.
Furthermore we also have a uniform estimate for $F(u)$ in $L^\infty(0,T;H^1(I))$, which implies 
$F(u) \in L^\infty((0,T)\times I)$. Since $F$ is monotone and it satisfies \eqref{eq:monotone}, this gives $u\in L^\infty((0,T)\times I)$.
In conclusion, knowing that $u$ is bounded, we deduce that the estimates for $F(u)$ lead to analogous bounds for $u$ in $L^2(0,T;H^2(I)) \cap H^1(0,T;L^2(I))$.
\end{proof}

The following technical result will be used in the proof of Theorem \ref{thm:main}.
\begin{lem}\label{lem:fractional}
Let $f: Q_T \to\RR$ be a function in $X = L^2(0,T;H^2(I)) \cap H^1(0,T;L^2(I))$, then
$$
f\in H^r(0,T;H^s(I)), \; \forall \; r,s\geq 0 
\; \text{ such that } \; r + \frac{s}{2} \leq 1,
$$
and, in turn,
$$
f\in C^{\alpha, \beta}(\bar{Q}_T)= C^{0,\alpha}([0,T];C^{0,\beta}(\bar{I})), \; \forall \; \alpha,\beta\geq 0
\; \text{ such that } \; 2\alpha + \beta \leq \frac{1}{2}.
$$
\end{lem}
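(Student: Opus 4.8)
The plan is to establish the two inclusions in turn: first the scale of anisotropic (space--time) Sobolev embeddings $X\hookrightarrow H^r(0,T;H^s(I))$ for $r+\tfrac s2\le 1$, and then to deduce the parabolic H\"older bound by composing one--dimensional Sobolev--Morrey embeddings in the space and then in the time variable, treating the two corners of the constraint region separately.

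For the first inclusion, note that $H^{r_1}(0,T;H^{s_1}(I))\hookrightarrow H^{r_2}(0,T;H^{s_2}(I))$ whenever $r_1\ge r_2$ and $s_1\ge s_2$ (both $(0,T)$ and $I$ being bounded), so it suffices to treat the endpoint line $r+\tfrac s2=1$, i.e. $s=2(1-r)$, $r\in[0,1]$. I would extend $f$ in the $t$ variable to all of $\R$ by a reflection operator; acting only on $t$, it is bounded simultaneously from $L^2(0,T;H^2(I))$ to $L^2(\R;H^2(I))$ and from $H^1(0,T;L^2(I))$ to $H^1(\R;L^2(I))$. Denoting the extension by $g$ and its partial Fourier transform in time by $\hat g(\eta,\cdot)$, one has $\int_\R\|\hat g(\eta,\cdot)\|_{H^2(I)}^2\,\d\eta<\infty$ and $\int_\R(1+\eta^2)\|\hat g(\eta,\cdot)\|_{L^2(I)}^2\,\d\eta<\infty$; combining the spatial interpolation inequality $\|v\|_{H^s(I)}\le C\|v\|_{H^2(I)}^{s/2}\|v\|_{L^2(I)}^{1-s/2}$ with H\"older's inequality (exponents $2/s$ and $2/(2-s)$) yields
\[
\int_\R(1+\eta^2)^{r}\|\hat g(\eta,\cdot)\|_{H^s(I)}^2\,\d\eta
\le C\Bigl(\int_\R\|\hat g\|_{H^2(I)}^2\,\d\eta\Bigr)^{s/2}\Bigl(\int_\R(1+\eta^2)\|\hat g\|_{L^2(I)}^2\,\d\eta\Bigr)^{1-s/2},
\]
the decisive point being that the residual weight $(1+\eta^2)^{\,r+s/2-1}$ is bounded precisely when $r+\tfrac s2\le1$. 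This gives $f\in H^r(0,T;H^s(I))$ with norm controlled by $\|f\|_X$. (Equivalently, one may simply invoke the classical anisotropic interpolation $L^2(0,T;H^2)\cap H^1(0,T;L^2)\hookrightarrow H^\theta(0,T;H^{2(1-\theta)})$ of Lions--Magenes.)

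For the second inclusion, the admissible set $\{\alpha,\beta\ge0:\,2\alpha+\beta\le\tfrac12\}$ is the triangle with vertices $(0,0)$, $(0,\tfrac12)$, $(\tfrac14,0)$, and since $C^{0,\alpha'}([0,T];C^{0,\beta'}(\bar I))\hookrightarrow C^{0,\alpha}([0,T];C^{0,\beta}(\bar I))$ for $\alpha'\ge\alpha$, $\beta'\ge\beta$, it suffices to treat the hypotenuse $2\alpha+\beta=\tfrac12$. For an interior point $0<\alpha<\tfrac14$ (so $\beta=\tfrac12-2\alpha\in(0,\tfrac12)$), set $s=\tfrac12+\beta\in(\tfrac12,1)$ and $r=\tfrac12+\alpha\in(\tfrac12,\tfrac34)$; then $r+\tfrac s2=\tfrac34+\tfrac12(2\alpha+\beta)=1$, so the first inclusion gives $f\in H^r(0,T;H^s(I))$, the sharp one--dimensional embedding $H^s(I)\hookrightarrow C^{0,\beta}(\bar I)$ upgrades this to $f\in H^r(0,T;C^{0,\beta}(\bar I))$, and the Banach--space--valued embedding $H^r(0,T;B)\hookrightarrow C^{0,\alpha}([0,T];B)$ with $B=C^{0,\beta}(\bar I)$ (via the Garsia--Rodemich--Rumsey lemma applied to the Bochner Gagliardo seminorm) concludes. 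The corner $(0,\tfrac12)$ is handled directly through the maximal--regularity embedding $X\hookrightarrow C([0,T];[H^2(I),L^2(I)]_{1/2})=C([0,T];H^1(I))\hookrightarrow C([0,T];C^{0,1/2}(\bar I))$. The corner $(\tfrac14,0)$ is handled by interpolating in time: from $f\in H^1(0,T;L^2(I))\hookrightarrow C^{0,1/2}([0,T];L^2(I))$ and $f\in C([0,T];H^1(I))$, Agmon's inequality $\|v\|_{L^\infty(I)}\le C\|v\|_{L^2(I)}^{1/2}\|v\|_{H^1(I)}^{1/2}$ gives $\|f(t,\cdot)-f(t',\cdot)\|_{C^0(\bar I)}\le C|t-t'|^{1/4}$ for all $t,t'\in[0,T]$, i.e. $f\in C^{0,1/4}([0,T];C^0(\bar I))$.

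The only genuinely delicate part is these two corners: there the Sobolev index in time sits at (or past) the threshold below which $H^\theta\hookrightarrow C^0$ fails, so they cannot be reached through the mixed Sobolev scale and must be obtained via $X\hookrightarrow C([0,T];H^1(I))$ together with elementary interpolation. Everything else --- the weighted H\"older bookkeeping in the first step and the two nested Morrey embeddings in the second --- is routine; the one thing to be careful about is to use the Sobolev--Morrey embeddings with their sharp H\"older exponents, so that the index arithmetic closes exactly up to the boundary line $2\alpha+\beta=\tfrac12$.
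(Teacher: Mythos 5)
Your proof is correct and follows essentially the same strategy as the paper's: extend $f$, pass to Fourier variables to obtain the mixed scale $H^r(0,T;H^s(I))$ under the constraint $r+\tfrac{s}{2}\le 1$, and then upgrade to parabolic H\"older continuity via the sharp one-dimensional Sobolev--Morrey embeddings with $\beta=s-\tfrac12$, $\alpha=r-\tfrac12$. The differences are minor or in your favour: you use a partial Fourier transform in time combined with the spatial interpolation inequality where the paper takes a joint space--time Fourier transform of a two-dimensional extension (both are standard and equivalent here), and you treat the corner cases $(\alpha,\beta)=(0,\tfrac12)$ and $(\tfrac14,0)$ separately via $X\hookrightarrow C([0,T];H^1(I))$ and Agmon's inequality --- points that the paper's final step, which requires $r,s>\tfrac12$, does not actually reach, although those corners are not needed for the application in the proof of Theorem \ref{thm:main}.
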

\begin{proof}
Thanks to the higher order extensions for Sobolev functions, we can define $f$ on a larger rectangular domain $R\subseteq \RR^2$ containing $Q_T$. Introducing a cut-off function, we further extend $f$ to the whole space ensuring sufficiently fast decay at infinity.
Let us call $g$ such an extension and
observe that the norm of $g$ in $X'=L^2(\RR;H^2(\RR))\cap H^1(\RR;L^2(\RR))$ is controlled by the corresponding norms of $f$ on $Q_T$.
In particular, for a suitable choice of $g$, we have the inequality:
$$
\nm{g}_{X'} \leq 2\nm{f}_{X}.
$$
Let $\langle \kappa \rangle = (1+|\kappa|^2)^{1/2}$. Denoting by $(\omega, \kappa)$ the conjugate variables of $(t,x)$ in Fourier space, we have that
$$
\langle \omega \rangle \hat{g} \in L^2(\RR^2), 
\quad
\langle \kappa \rangle^2 \hat{g} \in L^2(\RR^2).
$$
This implies that 
$
\left(
\langle \omega \rangle + \langle \kappa \rangle^2
\right) \hat{g} \in L^2(\RR^2)
$
and we obtain
$$
\langle \omega \rangle^r \langle \kappa \rangle^s |\hat{g}|
\leq
\left(
\langle \omega \rangle + \langle \kappa \rangle^2
\right)^{r+\frac{s}{2}} |\hat{g}|.
$$
We obtain the desired fractional Sobolev regularity provided that $r+\frac{s}{2}\leq 1$.
The H\"older regularity follows from the standard embeddings for fractional Sobolev spaces (see e.g. \cite{dinezza2012hitchhiker}). In particular, for $r,s>\frac{1}{2}$, we take $\alpha = r - \frac{1}{2}$ and $\beta = s - \frac{1}{2}$.
\end{proof}

\begin{proof}[Proof of Theorem \ref{thm:main}]
Thanks to Theorem \ref{thm:AT}, we know that classical solutions exist for short times and that, as explained in \cite{acquistapace1988quasilinear}, they can be extended by standard methods to a maximal interval of existence denoted by $J(u_0)$.
In order to show that such solutions exist for arbitrarily large time we are going to use the criterion provided by Theorem \ref{thm:amann}.
In particular, we need H\"older continuity of $u$ with respect to time, as well as a uniform $L^\infty$ bound in the space variable.
Thanks to Proposition \ref{prop:est2} we know that $u\in L^2(0,T;H^2(I)) \cap H^1(0,T;L^2(I))$ uniformly in time. This implies that we can apply Lemma \ref{lem:fractional} and obtain uniform H\"older estimates. Thus Theorem \ref{thm:amann} allows us to conclude the proof.
\end{proof}

\section{Degenerate systems}\label{sec:degenerate}
Consider a system of equations of the type:
\begin{equation}\label{sys0}
\dt u = \dx (M(t)F(u) + A(u) \dx F(u)).
\end{equation}
Notice that this is similar to system \eqref{eq:sysgen0}, but the matrix $A(x)$ has been replaced by $A(u)$, which now plays the role of a \quot{mobility matrix} (whereas $M$ satisfies the same assumptions introduced earlier).
Such system is possibly degenerate in the sense that the matrices $A(u)$ and $DF(u)$ may vanish if $u=0$ (for all the details see Definition \ref{entropystructure} and condition \eqref{eq:coercivity} below).
Also in this case, we will prove global existence of solutions under suitable assumptions on $F$. We will show that entropy methods developed in recent years, and, in particular, the so-called \quot{boundedness-by-entropy principle} presented in \cite{jungel2015boundedness}, can be applied without major modifications.

\begin{defi}[Entropy structure]\label{entropystructure}
We say system \eqref{sys0} has an \emph{entropy structure} if
there exists a function $H:\RR^N\to\RR$ such that
\begin{itemize}
\item $H$ is a convex function of class $C^{2}$ and it defines the following
\emph{entropy functional} 
$$
E[u]=\int_{\Omega}H(u) \d x.
$$
\item the map $F(\cdot) = DH(\cdot)$ (i.e. the gradient of $H$) defines a change of coordinates (bi-Lipschitz diffeomorphism) from an open and connected domain $U\subset\RR^{N}$ into the whole $\mathbb{R}^{N}$.
\end{itemize}
\end{defi}
\begin{defi}[Weak formulation]
\label{def:weaksolsys}
We say that the vector function $u$ is a weak solution of \eqref{sys0} subject to the Dirichlet boundary condition $u(t,\ellpm)=0$, for a.e. $t>0$, if
$$
u\in L^{2}(Q_T),
\quad
A(u)\de_x F(u)\in L^{2}(Q_T)
\quad
\partial_{t}u\in L^{2}(0,T;(H^{1}(I))').
$$
and, for any test function $\eta\in C_0^\infty(I)$ and a.e. $t\geq 0$, it holds
$$
\langle \de_t u, \eta \rangle 
+
\int_{\Omega}
\left[
A(u)\dx F(u) \cdot \dx \eta
-
\dx M F(u) \cdot \eta
\right]
\d x \d t
=0,
$$
where $\langle\cdot,\cdot\rangle$ indicates the duality pairing.
Moreover we require $u(t,\cdot)\to u_0(\cdot)$ in $H^{1}(I)'$ as $t\to 0$.
\end{defi}

\begin{rem}[Entropy decay]
The new unknown $w\in\RR^N$ obtained setting $w=DH(u)=F(u)$, for $u\in U$, is commonly referred to as the \emph{entropy variable}. The domain $U$ (from Definition \ref{entropystructure}) is typically a bounded Lipschitz subset of $\RR^N$.
We consider the boundary condition $F(u)\big|_{\de I} = 0$, which implies $w\big|_{\de I} = 0$.
Using Definition \ref{entropystructure}, we will see that for solutions of \eqref{sys0} in the sense of Definition \ref{def:weaksolsys} we have $\frac{dE}{dt}\leq 0$.
\end{rem}
We now present the main existence result of this section.
\begin{theor}[Boundedness-by-entropy principle, \cite{jungel2016entropy}]
\label{thm:existsys-with-entropy} 
Consider problem \eqref{sys0} with boundary condition $u(t,\ellpm)=0$ for a.e. $t>0$, let $U$ be an open and bounded subset of ${\mathbb{R}}^{n}$ and suppose $u_0\in U$. Consider the following hypotheses:
\begin{enumerate}
\item 
There exist $\gamma_1,\gamma_2\in\RR$ such that $\gamma_1<\gamma_2$ and $U \subset(\gamma_1,\gamma_2)^{N}$.
Furthermore, there exist $\alpha_{i}^{*},m^{i}\geq0$ ($i=1...N$) such
that for any vector $\xi\in\RR^N$ and any $u\in U$ 
\begin{equation}\label{eq:coercivity}
[ A(u)\he(H)(u) ]
\xi\cdot \xi
\geq\sum_{i=1}^{N}\alpha_{i}(u^{i})^{2}(\xi^{i})^{2},
\end{equation}
where $\alpha_{i}(u^{i})$ coincides either with $\alpha_{i}^{*}(u^{i}-\gamma_1)^{m^{i}-1}$
or with $\alpha_{i}^{*}(\gamma_2-u_{i})^{m_{i}-1}$.
\item 
We have $A\in C^0(\bar{U};\RR^{N\times N})$ and there exists $L>0$ such that, for all $u\in U$
and all $i.j=1...N$ for which $m_{j}>1$, it holds
$
|A(u)\he(H)^{ij}(u)|\leq L |\alpha_{j}(u^{j})|.
$

\item 
It holds $u_{0}(x)\in U$ for a.e. $x\in I$. 
\end{enumerate}
Then there exists a bounded weak solution $u\in\bar{U}$ of problem \eqref{sys0} in the sense of Definition \ref{def:weaksolsys} for all $t>0$.
\end{theor}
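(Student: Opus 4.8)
The statement is essentially the main existence theorem of \cite{jungel2016entropy} (see also \cite{jungel2015boundedness}) specialised to the present class of systems; I describe how that argument adapts, the only genuinely new feature being the convective term $\dx(M(t)F(u))$, which is absent from the original formulation. The whole construction is carried out in the \emph{entropy variable} $w:=F(u)=DH(u)$. By Definition \ref{entropystructure} the map $F$ is a bi-Lipschitz diffeomorphism of $U$ onto $\RR^N$, so $u=F^{-1}(w)$ belongs to $\bar U$ for \emph{every} $w\in\RR^N$; this is precisely the mechanism producing the claimed $L^\infty$ bound, \emph{a priori} and without any maximum principle. Note also that, after the chain rule, the diffusive flux reads $A(u)\dx F(u)=A(u)\he(H)(u)\dx u$, so that $A(u)\he(H)(u)$ is the effective diffusion matrix and hypotheses (1) and (2) of the theorem are exactly the structural conditions required by the boundedness-by-entropy method, adapted to it.

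First I would introduce a two-parameter approximation: an implicit Euler discretisation in time with step $\tau=T/\mathcal N$, together with an $\eps$-regularisation by the operator $\eps(1-\dxx)$. Setting $u^0=u_0$ and $t_k=k\tau$, one looks, given $u^{k-1}$, for $w^k\in H^1_0(I;\RR^N)$ such that for all $\eta\in H^1_0(I;\RR^N)$
\begin{align}\label{eq:discscheme}
\frac1\tau\int_I\big(u(w^k)-u^{k-1}\big)\cdot\eta\,\d x
&+\int_I\dx\eta\cdot A(u(w^k))\dx w^k\,\d x\nonumber\\
&+\int_I M(t_k)F(u(w^k))\cdot\dx\eta\,\d x
+\eps\int_I\big(\dx w^k\cdot\dx\eta+w^k\cdot\eta\big)\,\d x=0 ,
\end{align}
where $u(w):=F^{-1}(w)\in\bar U$ and $u^k:=u(w^k)$. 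Solvability of \eqref{eq:discscheme} follows from the Leray--Schauder fixed point theorem applied to the map sending $\bar w$ to the solution of the \emph{linear} problem obtained by freezing the $u$-dependence at $u(\bar w)$: thanks to the $\eps$-term the associated bilinear form is coercive on $H^1_0$ (the matrix $A$ being only continuous here, not uniformly elliptic), while the convective term contributes only to the right-hand side, which, since $u(\bar w)\in\bar U$, is bounded uniformly in $\bar w$ — so the a priori estimate needed for Leray--Schauder is immediate.

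The heart of the matter is the \emph{discrete entropy inequality}, obtained by choosing $\eta=w^k$ in \eqref{eq:discscheme}. Convexity of $H$ gives $(u^k-u^{k-1})\cdot w^k=(u^k-u^{k-1})\cdot DH(u^k)\ge H(u^k)-H(u^{k-1})$, while the new term is harmless: since $M(t_k)$ is symmetric and $w^k=F(u^k)$ vanishes on $\de I$,
$$
\int_I M(t_k)F(u^k)\cdot\dx F(u^k)\,\d x=\tfrac12\int_I\dx\big(F(u^k)\cdot M(t_k)F(u^k)\big)\,\d x=0 .
$$
Setting $\mathcal D_k:=\int_I A(u^k)\dx F(u^k)\cdot\dx F(u^k)\,\d x$ (the non-negative discrete entropy dissipation), we thus obtain $E[u^k]-E[u^{k-1}]+\tau\mathcal D_k+\tau\eps\nm{w^k}_{H^1}^2\le0$, and summation over $k$ yields at once the monotonicity $E[u^k]\le E[u_0]$ for all $k$ and a bound, uniform in $\tau$ and $\eps$, on $\sum_k\tau(\mathcal D_k+\eps\nm{w^k}_{H^1}^2)$. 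By the coercivity hypothesis \eqref{eq:coercivity}, $\mathcal D_k$ controls the $L^2(I)$-norms of $\dx\Phi_i(u^{k,i})$ for suitable scalar primitives $\Phi_i$ of the $\alpha_i$ (so that, in the non-degenerate directions, an honest $H^1$ bound on $u^i$ is recovered), and, combined with hypothesis (2), this furnishes a uniform $L^2(Q_T)$ bound on the diffusive flux $A(u^{(\tau)})\dx F(u^{(\tau)})$. Comparing terms in \eqref{eq:discscheme} then bounds the discrete time increment $\tau^{-1}(u^k-u^{k-1})$ in $L^2(0,T;(H^1(I))')$ uniformly in $\tau,\eps$; moreover $\eps\nm{w^{(\tau)}}_{L^2(0,T;H^1)}=O(\sqrt\eps)$, so the regularising contributions vanish in the limit.

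It then remains to pass to the limit, first $\tau\to0$ with $\eps$ fixed and then $\eps\to0$ (or along a diagonal sequence). Writing $u^{(\tau)},w^{(\tau)},M^{(\tau)}$ for the piecewise-constant-in-time interpolants, the uniform bounds and a (nonlinear) Aubin--Lions-type compactness argument give $u^{(\tau)}\to u$ strongly in $L^2(Q_T)$ and a.e.\ in $Q_T$ — passing through the functions $\Phi_i(u^{(\tau),i})$ when the system is genuinely degenerate — whence $F(u^{(\tau)})\to F(u)$ and $A(u^{(\tau)})\to A(u)$ a.e.\ and in every $L^p$, $p<\infty$, while $A(u^{(\tau)})\dx F(u^{(\tau)})\rightharpoonup A(u)\dx F(u)$ weakly in $L^2(Q_T)$ and $\tau^{-1}(u^k-u^{k-1})\rightharpoonup\dt u$ in $L^2(0,T;(H^1(I))')$; since also $M^{(\tau)}F(u^{(\tau)})\to M(t)F(u)$ in $L^2(Q_T)$, every term of the weak formulation of Definition \ref{def:weaksolsys} passes to the limit, the attainment of the initial datum in $(H^1(I))'$ is recovered as usual, and $u\in\bar U$ is stable under a.e.\ convergence. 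The main obstacle — as already in \cite{jungel2015boundedness} — is the identification of the weak limit of the \emph{possibly degenerate} diffusive flux $A(u^{(\tau)})\dx F(u^{(\tau)})$: this is where conditions \eqref{eq:coercivity} and (2) are indispensable, being exactly what make the flux bounded in $L^2$ and the family $\{u^{(\tau)}\}$ relatively compact despite the loss of uniform parabolicity. Everything else, including the treatment of the new convective correction, is routine.
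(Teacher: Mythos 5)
Your proposal is correct and follows essentially the same route as the paper: the paper's proof is itself only a sketch that defers to J\"ungel's boundedness-by-entropy argument and singles out exactly the same key adaptation you identify, namely that after passing to the entropy variable $w=F(u)$ and testing with $w$ the convective term reduces to $\tfrac12\int_I\dx\big(F(u)\cdot MF(u)\big)\,\d x$, which vanishes by the symmetry of $M$ and the Dirichlet condition $w|_{\de I}=0$. You simply reconstruct the underlying discretisation/regularisation/compactness machinery in more detail than the paper chooses to.
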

\begin{proof}
The proof is analogous to the one given in \cite{jungel2015boundedness}, the only differences consist in the presence of Dirichlet boundary conditions (instead of no-flux) and in the first order terms (which were not present in the original proof).
Neither of these variations affects the argument in a significant way.
In particular, the first order terms do not contribute to the estimates since, once we change variables to $w=F(u)$, and we test against $w$ in the weak formulation, such terms vanish. In particular, we have:
\begin{align*}
\frac{dE}{dt}=
\int_{Q_T} \dt u\cdot w \d x\d t
&= \int_{Q_T} \left[
\dx (M w)\cdot w + \dx  (A(u) \dx w)\cdot w \right]\d x\d t
\\
&= \int_{Q_T} \dx 
\left[
\left(\frac{1}{2} M w\cdot w\right) -   A(u) \dx w\cdot \dx w \right] \d x\d t
\\
&= - \int_{Q_T}   A(u) \dx w\cdot \dx w \d x\d t,
\end{align*}
which is the key estimate in \cite{jungel2015boundedness}.
The rest of the proof follows without major modifications.
\end{proof}

\bibliographystyle{plain}

\end{document}